\setlist{leftmargin=5.5mm}
\algrenewcommand\algorithmicrequire{\textbf{Input:}}
\algrenewcommand\algorithmicensure{\textbf{Output:}}
\newlength{\continueindent}
\newcommand*{\ALG@customparshape}{\parshape 2 \leftmargin \linewidth \dimexpr\ALG@tlm+\continueindent\relax \dimexpr\linewidth+\leftmargin-\ALG@tlm-\continueindent\relax}
\apptocmd{\ALG@beginblock}{\ALG@customparshape}{}{\errmessage{failed to patch}}
\algnewcommand\algorithmicpardo{\textbf{in parallel do}}
\definecolor{C0}{HTML}{1F77B4}
\definecolor{C1}{HTML}{ff7f0e}
\definecolor{C2}{HTML}{2ca02c}
\definecolor{C3}{HTML}{d62728}
\definecolor{C4}{HTML}{9467bd}
\definecolor{cadmiumgreen}{rgb}{0.0, 0.42, 0.24}
\definecolor{oldmauve}{rgb}{0.4, 0.19, 0.28}
\definecolor{royalazure}{rgb}{0.0, 0.22, 0.66}
\definecolor{harvardcrimson}{rgb}{0.79, 0.0, 0.09}
\definecolor{lightmauve}{rgb}{0.86, 0.82, 1.0}
\definecolor{darkbrown}{rgb}{0.4, 0.26, 0.13}
\definecolor{darkred}{rgb}{0.8, 0.0, 0.0}
\crefname{section}{Sec.}{Sec.}
\crefname{appendix}{App.}{App.}
\crefname{theorem}{Prop.}{Thm.}
\crefname{lemma}{Lem.}{Lem.}
\crefname{proposition}{Prop.}{Prop.}
\crefname{definition}{Def.}{Def.}
\crefname{assumption}{Asmp.}{Asmp.}
\Crefname{algorithm}{Alg.}{Alg.}
\crefname{figure}{Fig.}{Fig.}
\crefname{table}{Tab.}{Tab.}
\newcommand{\myparagraph}[1]{\vspace*{0.5em}\par\noindent\textbf{{#1}.}} 
\renewcommand{\epsilon}{\varepsilon}
\newcommand{\tabemph}[1]{}
\definecolor{puorange}{rgb}{0.80,0.20,0}
\definecolor{bluegray}{rgb}{0.04,0,0.7}
\definecolor{greengray}{rgb}{0.05,0.50,0.15}
\definecolor{darkbrown}{rgb}{0.40,0.2,0.05}
\definecolor{darkcyan}{rgb}{0,0.4,1}
\definecolor{black}{rgb}{0,0,0}
\definecolor{grey}{rgb}{0.93,0.93,0.93}
\newcommand \reals {\mathbb{R}}
    \newcommand \T {^{\top}}	
    \newcommand{\x}{w}
\DeclarePairedDelimiterX{\inp}[2]{\langle}{\rangle}{#1, #2} 
\DeclarePairedDelimiterX{\normsq}[1]{\Vert}{\Vert^2}{#1} 
\newcommand \grad {\nabla}
\newtheorem{theorem}{Theorem}
\newtheorem{proposition}[theorem]{Proposition}
\algnewcommand{\Initialize}[1]{%
  \State \textbf{Initialize:}
  \Statex \hspace*{\algorithmicindent}\parbox[t]{.8\linewidth}{\raggedright #1}
}
\DeclareMathOperator*{\argmin}{arg\,min}
\newcommand{\norm}[1]{\left\|#1 \right\|}
\newcommand{\innerfunc}{\phi}
\newcommand{\outerfunc}{f}
\newcommand{\varin}{w}
\newcommand{\varout}{u}
\newcommand{\spaceout}{U}
\newcommand{\subgrad}{v}
\newcommand{\dimin}{d}
\newcommand{\dimout}{k}
\newcommand{\nsamp}{n}
\newcommand{\sharparam}{\mu}
\newcommand{\surj}{\nu}
\newcommand{\dist}{\mathrm{dist}}
\newcommand{\stepsize}{\gamma}
\newcommand{\uv}{u}
\newcommand{\wv}{w}
\newcommand{\vv}{v}
\newcommand{\xv}{x}
\newcommand{\yv}{y}
\newcommand{\gv}{g}
\newcommand{\phiv}{\phi}
\newcommand{\xiv}{\xi}
\newcommand \id I
\begin{document}

\title{Modified Gauss-Newton Algorithms under Noise
\thanks{
This work was supported by NSF DMS-2023166, NSF CCF-2019844, NSF DMS-2052239, NSF DMS-2134012, NSF DMS-2133244, NIH, CIFAR-LMB, and faculty research awards.  \\
$^{\star\dagger}$ Now at Google Research.
}
}

\author{
Krishna Pillutla$^{1\star}$
$\qquad$
Vincent Roulet$^{1\dagger}$ 
$\qquad$
Sham Kakade$^2$
$\qquad$
Zaid Harchaoui$^1$ \\
{\small 
$^1$ University of Washington 
$\qquad$
$^2$ Harvard University
}
}

\date{\vspace{-1em}}

\maketitle

\begin{abstract}
    Gauss-Newton methods and their stochastic version have been widely used in machine learning and signal processing. Their nonsmooth counterparts, modified Gauss-Newton or prox-linear algorithms, can lead to contrasting outcomes when compared to gradient descent in large-scale statistical settings. We explore the contrasting performance of these two classes of algorithms in theory on a stylized statistical example, and experimentally on learning problems including structured prediction. In theory, we delineate the regime where the quadratic convergence of the modified Gauss-Newton method  is active under statistical noise. In the experiments, we underline the versatility of stochastic (sub)-gradient descent to minimize nonsmooth composite objectives. 
\end{abstract}

\section{Introduction} \label{sec:intro}

Arising from the literature on non-linear least squares~\cite{nocedal2006numerical, bjorck1996numerical},
the Gauss-Newton method was proposed to tackle generic compositional problems of the form
$\min_{\varin \in \reals^d} \outerfunc(\innerfunc(\varin))$
by linearizing the inner function $\innerfunc$ around the current iterate and solving the resulting subproblem~\cite{burke1985descent}. 

The Gauss-Newton method and its variants such as the Levenberg-Marquardt method~\cite{levenberg1944method,nesterov2007modified} have been applied successfully in phase retrieval~\cite{cichocki2002adaptive,herring2019gauss, repetti2014nonconvex}, nonlinear control~\cite{sideris2005efficient, roulet2019iterative}, and non-negative matrix factorization~\cite{huang2019low}. 
Modern machine learning problems such as deep learning possess a similar compositional structure, which makes Gauss-Newton-like algorithms potential good candidates~\cite{drusvyatskiy2019efficiency,tran2020stochastic, zhang2020stochastic}. 
However, in such problems, we are often interested in the generalization performance on unseen data. It is unclear whether the additional cost of solving the subproblems can be amortized by the superior efficiency of Gauss-Newton-like algorithms.

In this paper, we investigate whether modified Gauss-Newton methods or prox-linear algorithms with incremental gradient inner loops are superior to direct stochastic subgradient algorithms for nonsmooth problems with a compositional objective and a finite-sum structure in terms of generalization error. We present a statistical example and quantify when the quadratic convergence of the exact prox-linear method is not active before hitting the noise level of the problem. 
We present synthetic experiments that delineate the regimes where the stochastic subgradient methods outperform the prox-linear method. We also compare these algorithms on a structured prediction problem with a convolutional neural network (end-to-end path planning). 
Experimental results suggest that modified Gauss-Newton methods or prox-linear algorithms offer marginal gains in some settings, and confirm the versatility of direct stochastic subgradient algorithms to tackle complex learning problems. 
All proofs are given in the appendix.

\section{Problem Setting and Optimization Algorithms} \label{sec:pl:bg}
Given $\innerfunc_i:\reals^\dimin \rightarrow \reals^\dimout$ smooth, and $\outerfunc: \reals^\dimout \rightarrow \reals$ convex and Lipschitz, we consider finite-sum compositional minimization problems of the form 
\begin{equation}
\label{eq:pl:erm_pb}
F(\varin) :=  \frac{1}{\nsamp} \sum_{i=1}^\nsamp \outerfunc(\innerfunc_i(\varin))
\end{equation}
For \emph{multi-output regression} of
 input $\xv_i \in \reals^p$ to output $\yv_i \in \reals^\dimout$, we take $\phiv_i(\varin) = \varphi(\xv_i; \varin) - \yv_i$ as the residual of a predictor $\varphi(\cdot \,; \varin)$. We take a nonsmooth loss function such as $\outerfunc(\uv) = \|\uv\|_2$ ($\ell_2$ loss without the square), which is applicable in robust regression problems. 
A more sophisticated example is \emph{structured prediction}, the prediction of a combinatorial object such as a sequence. Here, $\innerfunc_i(\wv)$ is a score for each structured output, and $f$ is the structural hinge loss~\cite{crammer2001algorithmic,taskar2004max,tsochantaridis2004support}, computed efficiently using dynamic programming~\cite{viterbi1967error}. For applications, see, e.g.,~\cite{rush-2020-torch,gales2012structured,ratliff2006maximum}.

\begin{figure}[t]
\centering
\begin{adjustbox}{max width=0.42\linewidth}
\begin{tikzpicture}[scale=4, xscale=1.5]
    \draw[->, very thick] (-0.1, -1.2) --  (-0.1, 0.4);
    \draw[->, very thick] (-0.1, -1.2) --  (1.9, -1.2);

    \draw[very thick, domain=0:0.8541, smooth, black] plot({\x}, {-3 * (abs(\x - 0.2))^2.5});
    \draw[very thick, domain=0.8541:1.5, smooth, black] plot({\x}, {-2 * (abs(\x - 1.5))^1.5});
    
    \filldraw[black] (0.5, -0.147885090526395) circle [radius=0.5pt];
    \filldraw[black] (0.5, -1.2) circle [radius=0.5pt];
    \draw[thick, dotted]  (0.5, -0.147885090526395) -- (0.5, -1.2); 
    \node at (0.5, -1.32) {\Large ${w}_t$} ;
    
    \draw[ultra thick, domain=0.16:0.93762, smooth, dashed, C3] plot({\x}, {0.468302786666917 - 1.23237575438662 * \x + 0.5 * 1 * (\x - 0.5)^2});
    \draw[ultra thick, domain=0.93762:1.2, smooth, dashed, C3] plot({\x}, {3 * \x - 3.5 + 0.5 * 1 * (\x - 0.5)^2});
    
    \node at (1.65, -0.04) {\Large $f({w})$} ;
    \node[C3] at (1.52, 0.3) {\Large $\text{Model}(w; w_t)$} ;
    
    \filldraw[C3] (0.93762, -0.59) circle [radius=0.5pt];
    \filldraw[C3] (0.93762, -1.2) circle [radius=0.5pt];
    \draw[thick, dotted, C3]  (0.93762, -0.59) -- (0.93762, -1.2); 
    \node[C3] at (0.93762, -1.32) {\Large ${w}_{k+1}$} ;
    
\end{tikzpicture}
\end{adjustbox}
\hspace{2em}
\includegraphics[width=0.4\linewidth]{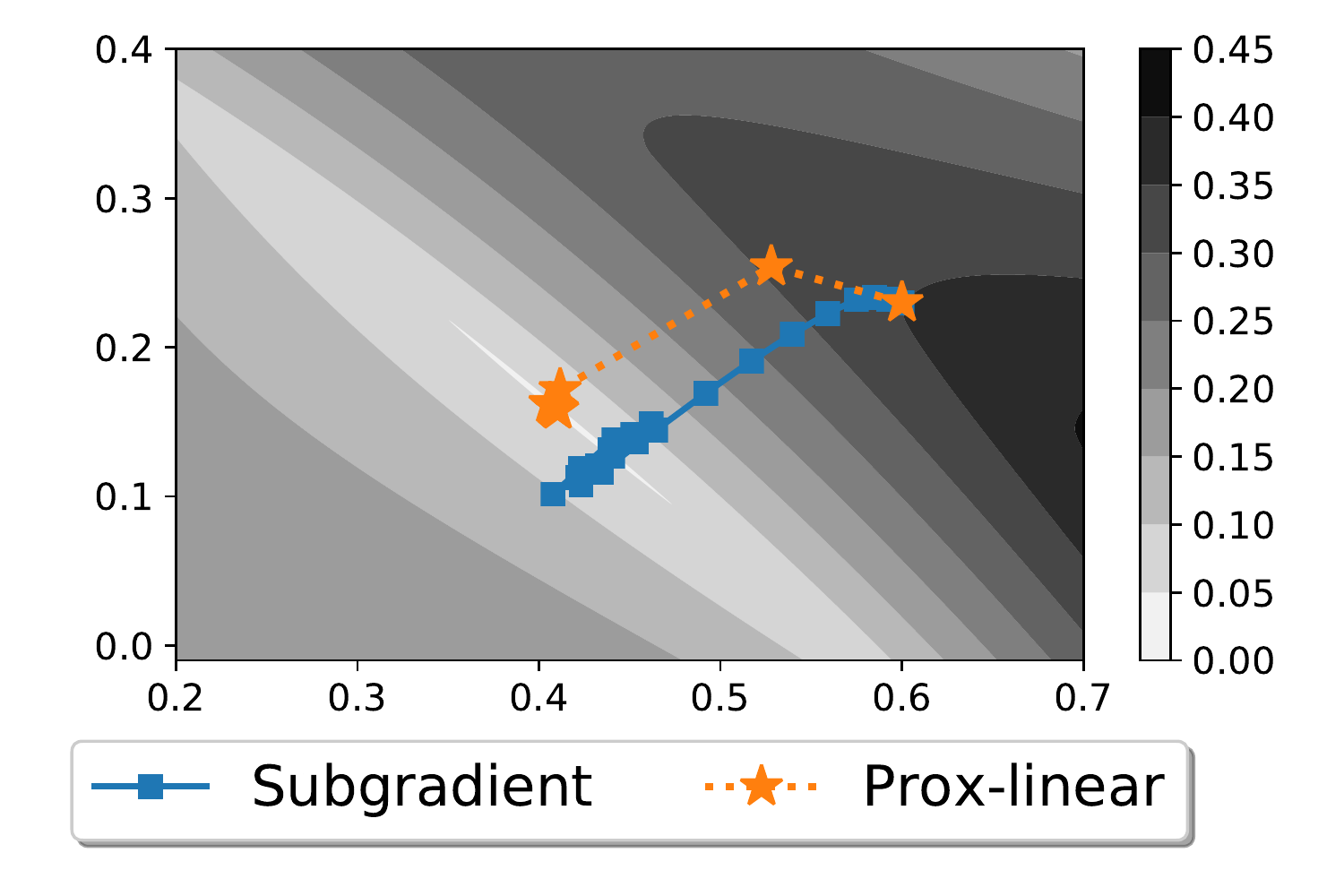}
\caption{
\small
\textbf{Left}: A numerical comparison of gradient descent and the exact prox-linear method on a nonsmooth and nonconvex function $F: \reals^2 \to \reals$. The prox-linear method builds a more accurate model of the objective function, especially around points of nonsmoothness.
\textbf{Right}:
    The modified Gauss-Newton (a.k.a. the prox-linear) method builds a convex model of the objective $F(\varin)$ around $\varin_t$ and finds $\varin_{t+1}$ by minimizing this model.
}
\label{fig:pl:gd-vs-pl}
\end{figure}

We compare two families of optimization algorithms, which are stochastic and nonsmooth versions 
of gradient descent and the Gauss-Newton algorithm. 
The \textbf{stochastic subgradient method} (abbreviated \textit{SGD}) is the nonsmooth and stochastic analogue of gradient descent.
In iteration $t$, SGD samples an element $i_t$ from the available $\nsamp$ uniformly at random and takes a step 
in the direction of its subgradient $\subgrad_t \in \partial (\outerfunc \circ \innerfunc_{i_t})(\varin_t)$: 
\begin{align}
    \varin_{t+1} = \varin_t - \gamma \subgrad_t \,,
\end{align}
where $\gamma$ is the learning rate and
$\partial (\outerfunc \circ \innerfunc_{i_t})$ denotes the regular (or Fr\'echet) subdifferential.
For \eqref{eq:pl:erm_pb}, it takes a simple form
$\partial (\outerfunc \circ \innerfunc_i)(\varin) = \grad \innerfunc_i(w)\T \partial \outerfunc \big(\innerfunc_i(w)\big)$, 
where $\partial \outerfunc$ refers to the convex subdifferential of $\outerfunc$ and $\grad \innerfunc_i$ refers to the Jacobian of $\innerfunc_i$~\cite[Theorem 10.6]{rockafellar2009variational}.
In deep learning, the subgradient $\subgrad \in \partial(\outerfunc \circ \innerfunc_i(\varin))$ requires the computation of the vector-Jacobian product, readily given by reverse-mode automatic differentiation implemented in software such as PyTorch.

The \textbf{modified Gauss-Newton} method~\cite{nesterov2007modified}, also known as the \textbf{prox-linear} method~\cite{burke1985descent,burke1995gauss, drusvyatskiy2019efficiency}, applied to~\eqref{eq:pl:erm_pb}, proceeds by finding approximate solutions of a partially linearized approximation of the objective with an additional regularization term. 
Given 
a regularization parameter $\kappa\geq 0$,
it iterates
\begin{align}
\varin_{t+1} = \argmin_{\varin \in \reals^\dimin} \  & 
\Bigg\{
\frac{1}{n} \sum_{i=1}^\nsamp \outerfunc(\innerfunc_i(\varin_t)  + \nabla \innerfunc_i(\varin_t)^\top(\varin - \varin_t)) 
+ \frac{\kappa}{2} \|\varin-\varin_t \|_2^2
\Bigg\}  \,.
\label{eq:pl:prox_lin}
\end{align}

As explained in \Cref{fig:pl:gd-vs-pl} (left), the prox-linear method creates a \textit{convex model} of $F$ around $\varin_t$ by linearizing the inner function as 
$\innerfunc_i(\varin) \approx \innerfunc_i(\varin_t)  + \nabla \innerfunc_i(\varin_t)^\top(\varin - \varin_t)$.
The next iterate \eqref{eq:pl:prox_lin} is the minimizer 
of the model plus a proximal term. 
Compare this with the subgradient method, where the model is 
$F(\varin_t) + \vv\T(\varin - \varin_t)$, for $\vv \in \partial F(\varin_t)$; see also \Cref{fig:pl:gd-vs-pl} (right).
It is usually not possible to solve the subproblem~\eqref{eq:pl:prox_lin} exactly (barring some special cases). We consider using accelerated incremental algorithms such as Casimir-SVRG~\cite{pillutla2018smoother}.
Computationally, each iteration of the inner loop requires having access to Jacobian-vector product $\vv\mapsto \grad \innerfunc_i(\varin) \vv$ which is most efficiently computed via forward-mode automatic differentiation.

\section{Tradeoffs of the Prox-linear Method in Statistical Settings} \label{sec:pl:noise}
Gauss-Newton methods and their variants are known to enjoy quadratic local convergence, provided \emph{the subproblems are solved exactly}. 
In statistical learning problems, it is not meaningful to optimize beyond the noise level of the problem. If the noise level of the problem is large, the quadratic convergence may not be useful. We formalize this in a stylized example.

We start with a typical quadratic local convergence result of the prox-linear method in the overparameterized regime $d > nk$. In particular, we assume that the minimal singular value $\sigma_{\min}(\grad \innerfunc(\varin)\T)$ of the transposed Jacobian of $\innerfunc = (\innerfunc_1; \cdots; \innerfunc_n)$ is strictly positive, which implies that the Jacobian $\grad \innerfunc$ is surjective. 

\begin{proposition} \label{prop:pl:main}
	Consider problem \eqref{eq:pl:erm_pb} where $\outerfunc$ is $\ell$-Lipschitz, convex, and $\sharparam$-sharp for some $\sharparam > 0$ 
 (see \Cref{sec:a:cas:quadratic} for a precise definition).
	 Suppose the function $\innerfunc(\varin) = (\innerfunc_1(\varin); \dots; \innerfunc_n(\varin)) \in \reals^{n \dimout}$ is $L$-smooth and satisfies $\sigma_{\min}(\grad \innerfunc(\varin)\T) \geq \surj >0$ for any $\varin \in \reals^\dimin$. 
        Then, the sequence $(\varin_t)_{t=0}^\infty$ produced by the exact prox-linear algorithm~\eqref{eq:pl:prox_lin} with $\kappa = L \ell$ starting from arbitrary $\varin_0 \in \reals^\dimin$ converges 
	    globally to its minimum value $F(\varin_t) \to  F^* := \min F$.
	    Further, as soon as an iterate $\varin_{j}$ satisfies $F(\varin_j) - F^* \leq (\sharparam\surj)^2/(L\ell n^{3/2})
    	$, the subsequence $(\varin_t)_{t=j}^\infty$ converges quadratically as
    	\begin{align} \label{eq:pl:prox-lin-quadratic}
            F(\varin_{t+1}) - F^*
            \leq \frac{L\ell n^{3/2}}{2(\sharparam\surj)^2}\big(F(\varin_t)
            - F^*\big)^2  \,.
    	\end{align}
\end{proposition}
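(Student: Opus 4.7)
Plan: I would prove the claim in two phases---global convergence first, then local quadratic convergence---both built on a single linearization error bound. Writing $\hat F(\varin;\varin_t) := \tfrac{1}{n}\sum_i \outerfunc(\innerfunc_i(\varin_t) + \grad\innerfunc_i(\varin_t)^\top(\varin-\varin_t))$ for the convex model in~\eqref{eq:pl:prox_lin}, the $\ell$-Lipschitzness of $\outerfunc$, the $L$-smoothness of the stacked map $\innerfunc:\reals^{\dimin}\to\reals^{n\dimout}$ (which gives $\|\innerfunc(\varin) - \innerfunc(\varin_t) - \grad\innerfunc(\varin_t)^\top(\varin-\varin_t)\|_2 \leq \tfrac{L}{2}\|\varin-\varin_t\|^2$), and a single Cauchy--Schwarz to pass from $\sum_i\|\cdot\|$ to $\sqrt{n\sum_i\|\cdot\|^2}$ yield
\begin{equation*}
|F(\varin) - \hat F(\varin;\varin_t)| \leq \tfrac{L\ell}{2\sqrt n}\|\varin - \varin_t\|^2.
\end{equation*}
The $1/\sqrt n$ factor here---arising because the smoothness of $\innerfunc$ controls a block $\ell^2$-norm while $F$ averages the blocks in $\ell^1$---is what ultimately produces the $n^{3/2}$ scaling in the final rate.

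For global convergence, I would apply this bound at $\varin=\varin_{t+1}$ and combine with optimality of $\varin_{t+1}$ in the proximal subproblem (with $\kappa=L\ell$) to obtain the descent inequality $F(\varin_{t+1}) \leq F(\varin_t) - \tfrac{L\ell}{2}(1-1/\sqrt n)\|\varin_{t+1}-\varin_t\|^2$. Telescoping gives summability of $\|\varin_{t+1}-\varin_t\|^2$, hence $\varin_{t+1}-\varin_t\to 0$. The hypothesis $\sigma_{\min}(\grad\innerfunc(\varin)^\top) \geq \surj$ forces $\grad\innerfunc(\varin)$ to have full column rank, so by the chain rule for the subdifferential every stationary point of $F$ must satisfy $\innerfunc_i(\varin)\in\argmin\outerfunc$ for each $i$ and is therefore a global minimizer. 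Since $L\ell(\varin_t-\varin_{t+1})$ is an approximate subgradient of $F$ at $\varin_{t+1}$ whose norm tends to zero, every accumulation point of $(\varin_t)$ minimizes $F$, and the monotonicity of $F(\varin_t)$ then gives $F(\varin_t)\to F^*$.

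For the local quadratic rate, I would first derive a sharpness property of $F$: the $\sharparam$-sharpness of $\outerfunc$ gives $F(\varin) - F^* \geq \tfrac{\sharparam}{n}\sum_i\dist(\innerfunc_i(\varin),\argmin\outerfunc) \geq \tfrac{\sharparam}{n}\dist(\innerfunc(\varin),(\argmin\outerfunc)^n)$ (using $\|\cdot\|_1\geq\|\cdot\|_2$ on the $n$-vector of per-block distances), and a Graves--Lyusternik-type metric regularity estimate (derivable from $\sigma_{\min}(\grad\innerfunc^\top)\geq\surj$ and the $L$-smoothness of $\innerfunc$ via a Taylor expansion) converts distance in the image of $\innerfunc$ into distance in $\varin$-space with a factor of order $\surj$, yielding a local growth bound $F(\varin) - F^* \gtrsim (\sharparam\surj/n)\dist(\varin,\argmin F)$. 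For the quadratic upper bound, the three-point inequality from $L\ell$-strong convexity of the proximal model at $\varin_{t+1}$, taken with reference point $\varin^\star = \proj_{\argmin F}(\varin_t)$ and combined with the linearization bound at both $\varin_{t+1}$ and $\varin^\star$, gives $F(\varin_{t+1}) - F^* \leq O(L\ell)\dist(\varin_t,\argmin F)^2$. Substituting the sharpness bound produces the claimed quadratic contraction rate and the threshold.

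Main obstacle: Getting exactly the $n^{3/2}$ scaling claimed requires a careful accounting of how the $1/\sqrt n$ factor from the linearization bound, the $1/n$ from averaging in $F$, and the metric regularity factor $\surj$ interleave through the Cauchy--Schwarz comparison between $\|\cdot\|_1$ and $\|\cdot\|_2$ on the $\reals^{n\dimout}$-side. Choosing the reference point as the projection $\varin^\star=\proj_{\argmin F}(\varin_t)$ rather than an arbitrary minimizer, and exploiting that $\varin_t - \varin^\star$ lies in the normal space (row space of $\grad\innerfunc(\varin^\star)^\top$) when controlling the Taylor remainder of the metric regularity step, is what lets the exponents match exactly; this bookkeeping is where the bulk of the technical work lies.
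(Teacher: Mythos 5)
Your route is genuinely different from the paper's, and as written it has two real gaps. The paper never passes through parameter-space distance or metric regularity: it reduces the finite sum to a single composition via $\bar \outerfunc(\varout_1;\dots;\varout_n)=\frac1n\sum_i \outerfunc(\varout_i)$, checks that $\bar\outerfunc$ is $(\ell/\sqrt n)$-Lipschitz and $(\sharparam/n)$-sharp (this is exactly your $1/\sqrt n$ observation, and it is where $n^{3/2}$ comes from), and then, for the single composition, bounds the prox-linear subproblem value by restricting the minimization to the segment $v=s\,v_t^*$, $s\in[0,1]$, where $\grad\innerfunc(\varin_t)v_t^*=\proj_{\spaceout^*}(\innerfunc(\varin_t))-\innerfunc(\varin_t)$ and $\|v_t^*\|\le \dist(\innerfunc(\varin_t),\spaceout^*)/\surj$ by the singular-value bound. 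Convexity and sharpness of $\outerfunc$ along that segment give one inequality in $s$ whose minimization simultaneously yields (i) a per-iteration decrease by the fixed amount $(\sharparam\surj)^2/(2\kappa)$ whenever the gap exceeds the threshold, hence global convergence (and the identity $F^*=\outerfunc^*$) with no compactness argument, and (ii) the quadratic contraction with exactly the stated constant once below the threshold.

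The gaps in your plan. First, the global phase: telescoping your sufficient-decrease inequality only gives $\|\varin_{t+1}-\varin_t\|\to0$; concluding $F(\varin_t)\to F^*$ through accumulation points fails if the iterates are unbounded (nothing in the hypotheses is coercive, and $\argmin F$ may be unbounded), and the assertion that $L\ell(\varin_t-\varin_{t+1})$ is an approximate subgradient of $F$ at $\varin_{t+1}$ is itself a nontrivial composite-structure step (it is an exact subgradient of the \emph{model}, not of $F$), essentially the Drusvyatskiy--Paquette near-stationarity machinery that you would still have to carry out. Second, the quadratic phase: the Lyusternik--Graves estimate is not "derivable via a Taylor expansion" in one line---pointwise surjectivity plus $L$-smoothness yields metric regularity only through an iterative or variational argument, typically with locality restrictions or a degraded modulus, and it also needs $\innerfunc^{-1}\big((\argmin\outerfunc)^n\big)\neq\emptyset$. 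Even granting the ideal modulus $1/\surj$, your chain (growth bound in $\dist(\varin_t,\argmin F)$ plus the three-point inequality with $\kappa=L\ell$) gives $F(\varin_{t+1})-F^*\le \big(\tfrac{L\ell}{2}+\tfrac{L\ell}{2\sqrt n}\big)\dist(\varin_t,\argmin F)^2 \lesssim \tfrac{L\ell n^{2}}{(\sharparam\surj)^2}\big(F(\varin_t)-F^*\big)^2$, i.e.\ an $n^{2}$ constant without the factor $\tfrac12$, not the claimed $\tfrac{L\ell n^{3/2}}{2(\sharparam\surj)^2}$; you flag this bookkeeping as the main obstacle but do not resolve it, whereas the paper's image-space construction of $v_t^*$ produces the constants directly. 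So the proposal needs the global argument replaced (or supplemented by the error-bound/sharpness reasoning made explicit) and the local argument either tightened or replaced by the direct segment argument to actually deliver the stated threshold and rate.
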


\myparagraph{Statistical Setting} 
Suppose we are given $n$ input-output pairs $(\xv_i, \yv_i)$, where $\yv_i = \varphi(\xv_i; \overline \varin) + \xiv_i$ is given
from a parameterized nonlinear function $\varphi(\, \cdot \, ; \overline \varin): \reals^p \to \reals^\dimout$ with parameters $\overline \varin \in \reals^\dimin$, and $\xiv_i \sim \mathcal{N}(0, \sigma^2 \id_\dimout)$ is i.i.d. Gaussian noise. 
We wish to recover the true signal $\overline \varin$'s predictions, i.e., find $\hat \varin$ such that
$\varphi(\xv_i ; \hat \varin) \approx \varphi(\xv_i ; \overline \varin)$ for each $i$. 
We instantiate \eqref{eq:pl:erm_pb} with $\innerfunc_i(\varin) = \varphi(\xv_i ; \varin) - \yv_i$ and $\outerfunc = \| \cdot \|_2$ in order to solve this problem. 
This is different from the related choice of $\outerfunc = \norm{\cdot}_2^2$, which corresponds to non-linear least squares regression in the fixed design setting.

We consider \emph{early stopping} of the optimization once we reach the noise level. That is, we stop the optimization once the objective value $F(\varin_t)$ in iteration $t$ falls below $F(\overline \varin)$. 
In this setting, we now show that the prox-linear method can enjoy quadratic local convergence only when the noise level $\sigma$ of the problem is small enough. 
To this end, we make a general assumption on the radius $R$ of local quadratic convergence;  \Cref{prop:pl:main} provides a concrete lower bound on $R$.

\begin{proposition} \label{prop:pl:noise}
    Fix some $\delta \in (0, 1)$ and consider problem \eqref{eq:pl:erm_pb} with $\innerfunc_i$ and $\outerfunc$ as defined above with the output dimension $\dimout \ge 4 \log (2n/\delta)$. Suppose that
    (i)
    $\varin \mapsto \varphi(\xv_i; \varin)$ is $L$-smooth for each $i \in [n]$, and, 
    (ii) $\varphi$ can interpolate the data so that $\varphi(\xv_i; \wv^*) = \yv_i$ for each $i \in [n]$ for some $\varin^* \in \reals^\dimin$.
    Suppose there exists a scalar $R>0$ and an integer $j$ such that for all integers $t \ge j$, we have
    \begin{align} \label{eq:pl:noise-quadratic}
    \begin{aligned}
        F(\wv_t) - \min F &\leq R\,, \quad
        \text{and} \\
        F(\wv_{t+1}) - \min F &\leq \frac{1}{2R}\big(F(\wv_t) - \min F\big)^2  \,.
    \end{aligned}
    \end{align}
    Then, we have the following with probability at least $1-\delta$. If the noise level satisfies $\sigma > \tilde O\big(R/(k^{1/2} - k^{1/4})\big)$,
    then the first iterate $\varin_t$ enjoying quadratic convergence \eqref{eq:pl:noise-quadratic} 
    satisfies $F(\varin_t) < F(\overline \varin)$.
    Conversely, if the noise level satisfies
    $\sigma < \tilde O\big( R / (k^{1/2} + k^{1/4})\big)$, 
    then the first iterate $\varin_t$ enjoying quadratic convergence \eqref{eq:pl:noise-quadratic} 
    satisfies $F(\varin_t) > F(\overline \varin)$. 
\end{proposition}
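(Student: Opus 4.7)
The plan is to compare the quadratic-convergence radius $R$ with the noise-level value $F(\bar w)$ via two observations. First, the interpolation assumption $\varphi(\xv_i;\wv^\ast) = \yv_i$ gives $\innerfunc_i(\wv^\ast) = 0$, so $F(\wv^\ast) = \outerfunc(0) = 0$ and $\min F = 0$. Plugging the generative model into the objective then yields $\innerfunc_i(\bar w) = -\xiv_i$, and hence
\[
F(\bar w) \;=\; \tfrac{1}{n}\sum_{i=1}^n \|\xiv_i\|_2 \;=\; \tfrac{\sigma}{n}\sum_{i=1}^n \|\zv_i\|_2, \qquad \zv_i \sim \mathcal{N}(0, \id_k)\ \text{i.i.d.},
\]
is an average of i.i.d. scaled chi-distributed variables. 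The map $\zv\mapsto\|\zv\|_2$ is $1$-Lipschitz, so Gaussian Lipschitz concentration (combined with the Stirling estimate $\mathbb{E}[\|\zv_i\|_2] = \sqrt{k}(1 - O(1/k))$) gives sub-Gaussian tails around $\sqrt{k}$ for each $\|\zv_i\|_2$. A union bound over the $n$ samples, together with the hypothesis $k \geq 4\log(2n/\delta)$ that lets the resulting $\sqrt{\log(n/\delta)}$ deviation be re-expressed as $\tilde O(k^{1/4})$, will yield
\[
F(\bar w) \in \bigl[\sigma(\sqrt{k} - \tilde O(k^{1/4})),\ \sigma(\sqrt{k} + \tilde O(k^{1/4}))\bigr]
\]
with probability at least $1-\delta$.

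With this concentration in hand, both halves of the statement follow by a direct comparison. Taking $j$ to be the onset of the quadratic region, we have $F(\wv_j) \leq R$ by assumption, and by the minimality of $j$ also $F(\wv_{j-1}) > R$, so the onset iterate sits on the boundary with $F(\wv_j) \approx R$. In the large-noise regime $\sigma > \tilde O(R/(k^{1/2} - k^{1/4}))$, the lower tail of the concentration gives $F(\bar w) > R \geq F(\wv_j)$, establishing the first inequality. In the small-noise regime $\sigma < \tilde O(R/(k^{1/2} + k^{1/4}))$, the upper tail gives $F(\bar w) < R$; identifying ``the first iterate enjoying quadratic convergence'' with the transition iterate where $F(\wv_t) \approx R$ then yields $F(\wv_t) > F(\bar w)$.

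The main technical hurdle will be the concentration step: a direct Lipschitz-Gaussian bound naturally produces deviations of order $\sqrt{\log(n/\delta)}$, and turning this into the clean $\tilde O(k^{1/4})$ scaling that drives the $k^{1/2} \pm k^{1/4}$ thresholds appearing in the statement is where the hypothesis $k \geq 4\log(2n/\delta)$ is crucially used. A secondary, more interpretive subtlety arises in the small-$\sigma$ direction: ``the first iterate enjoying quadratic convergence'' must be identified with the onset iterate where $F(\wv_t)$ is close to (rather than far below) $R$, so that the inequality $R > F(\bar w)$ translates to $F(\wv_t) > F(\bar w)$; this identification is natural when $R$ is taken as the sharpest valid radius for the sequence.
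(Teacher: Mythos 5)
Your proposal follows essentially the same route as the paper's proof: interpolation gives $\min F = 0$, one writes $F(\overline{w}) = \frac{1}{n}\sum_{i=1}^n \|\xi_i\|_2$, and a per-sample concentration bound plus a union bound over the $n$ samples (using $k \ge 4\log(2n/\delta)$) sandwiches $F(\overline{w})$ in $\sigma\sqrt{k}\,\bigl(1 \pm \tilde O(k^{-1/4})\bigr)$, after which the comparison with $R$ gives both regimes. The only differences are minor: the paper invokes the Laurent--Massart $\chi^2$ tail bound where you use Lipschitz Gaussian concentration of the norm (either works), and the interpretive point you flag about identifying the ``first iterate enjoying quadratic convergence'' with the onset iterate in the small-noise case is handled just as implicitly in the paper's own argument.
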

\Cref{prop:pl:noise} shows that the potential advantages of the prox-linear method in terms of local quadratic convergence may not be relevant in some statistical problems with high noise.

\begin{figure}[t]
	\begin{center}
		\includegraphics[width=0.9\linewidth]{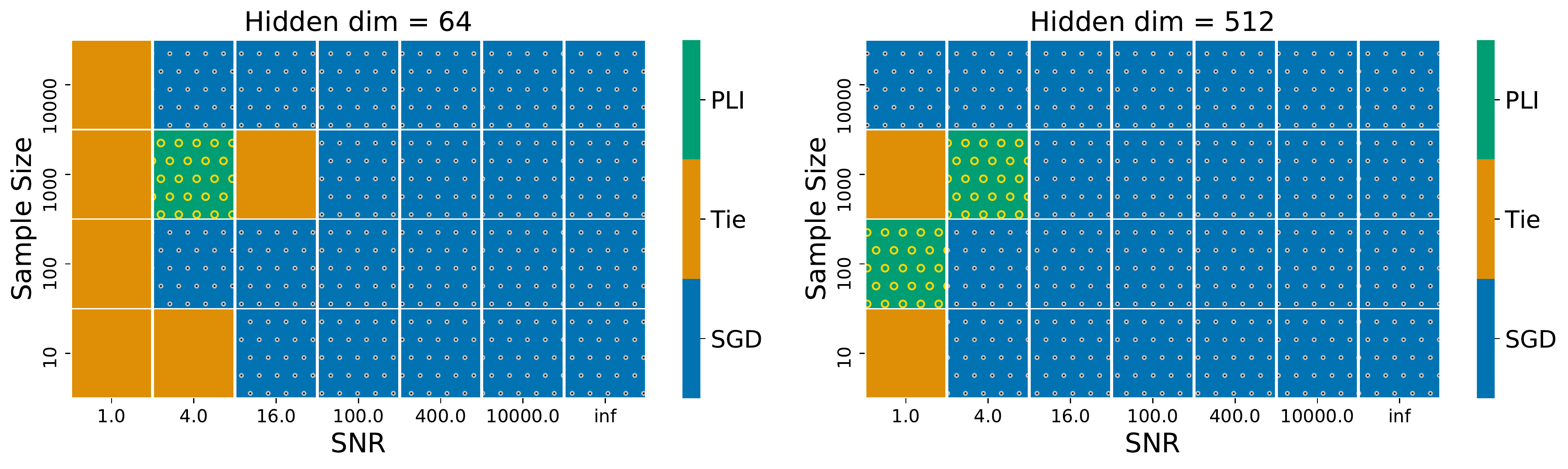}
	\end{center}	
	\caption{\small
	Synthetic multi-output regression: stochastic subgradient method (SGD) vs. the prox-linear with incremental gradient inner-loop (PLI) while varying the number of samples $\nsamp$ and the signal-to-noise ratio (SNR). We highlight the method which finds the smallest test $\ell_2$ loss after 100 epochs.
	\label{fig:pl:heatmap}}
\end{figure}

\section{Experiments}
We consider 3 setups: multi-output regression, structured prediction, and solving non-linear equations.
All hyper-parameters are tuned by grid search.

\myparagraph{Synthetic Multi-output Regression}
We consider a regression task of predicting output $\yv \in \reals^{k}$ from input $\xv \in \reals^{p}$, given a synthetic dataset  $\{(\xv_i, \yv_i)\}_{i=1}^{\nsamp}$ of input-output pairs of varying size $\nsamp$ where $p=128$ and $k=10$.
We sample each input as $\xv_i \sim \mathcal{N}(0, \Sigma)$, where the covariance $\Sigma$ exhibits a $1/j^2$ spectral decay. 
The output is generated as $\yv_i = \varphi^\star(\xv_i; \wv^\star) + \sigma\xiv_i$, where 
$\varphi^\star(\cdot\,; \wv^\star)$ is a multilayer perceptron (MLP) with one hidden layer of width $256$, and standard normal weights $\wv^\star$, while $\xiv_i$ is sampled from a standard Laplace distribution in $\reals^k$ and $\sigma$ is the noise scale, which we vary. 
We define the signal-to-noise ratio (SNR) of a problem instance as $\text{SNR} = \|\wv^\star\|^2/\sigma^2$.
Finally, the loss and evaluation measure we use is the nonsmooth $\ell_2$ loss. 

\begin{figure}[t]
	\begin{center}
		\includegraphics[width=0.7\linewidth]{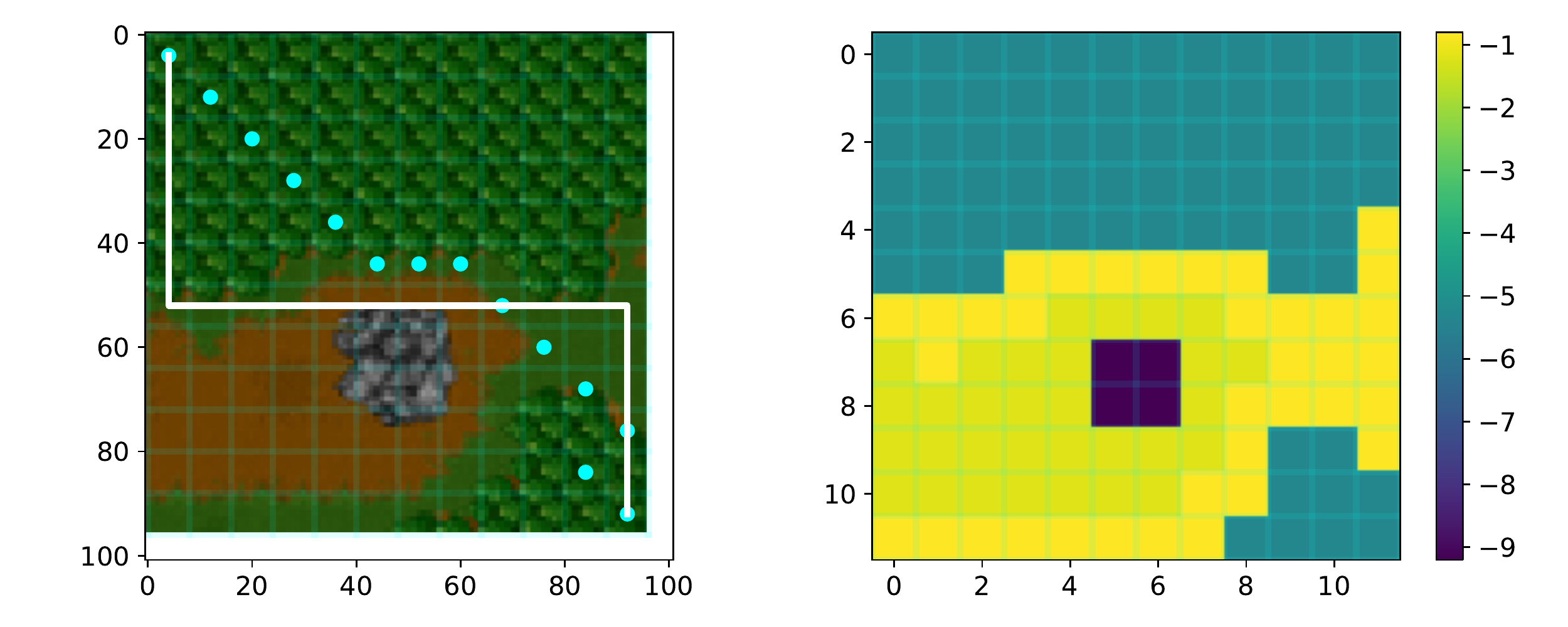}
	\end{center}	
	\caption{
	\small Planning example. \textbf{Left}: a map and its best path in (solid) white. \textbf{Right}: corresponding rewards. \label{fig:pl:warcraft}}
\end{figure}

We vary the number of samples $\nsamp$ and the SNR (equivalently, $\sigma$) and compare the two methods introduced in \Cref{sec:pl:bg}: the stochastic subgradient method (SGD) and prox-linear with incremental gradient inner loop (PLI). We tune hyperparameters to achieve the smallest loss on a held-out validation dataset in $100$ epochs and report the test loss. We run the experiment in two regimes: (a) under-parameterized, where the model is an MLP with $64$ hidden units, and, (b) over-parameterized, where the MLP has $512$ hidden units, compared to the $256$ hidden units of $\varphi^\star(\cdot\,; \wv^\star)$.
We see in \Cref{fig:pl:heatmap} that \textbf{SGD tends to outperform PLI overall}, especially in the high SNR regime. In the low SNR regime, PLI and SGD are mostly tied in their performance, exhibiting very similar test errors.

\myparagraph{Path Planning as Structured Prediction}
Among all monotonic paths from the top left corner to the bottom right corner of a grid, our task is to find the path that maximizes the rewards collected on each tile it passes through. Specifically, we consider images generated in the game Warcraft II~\cite{guyomarch2017warcraft}; see \Cref{fig:pl:warcraft}. Each tile corresponds to some terrain such as water, desert, grass, or rock with a fixed reward (grass $>$ desert $>$ water $>$ rock). 
As long as the rewards can directly be observed, this task can be solved by dynamic programming. 
In this experiment, the rewards are not directly observed; 
they are computed as the transformation of the raw pixels of each tile by a convolutional neural network. Our goal is to learn the reward function from a dataset of random maps with their associated optimal path.
Given a map $\xv$ with associated best path $\yv$, denote by $\psi(\xv, \yv, \yv'; \wv)$ the score of a path $\yv'$ parameterized by $\wv$. Our objective is
\begin{align}\label{eq:pl:planning}
	\min_{\wv\in\reals^d}\ 
	\frac{1}{n} \sum_{i=1}^n  
	\max_{\yv' \in \mathcal{Y}} 
	\psi(\xv_i, \yv_i, \yv' ; \wv)  
 + \frac{\mu}{2}\|\wv\|_2^2,
\end{align}
where $(\xv_i)_{i=1}^n$ are the maps, $(\yv_i)_{i=1}^n$ are their best paths, and $\mu \geq 0$ is a regularization parameter.

\begin{figure}[t]
	\begin{center}
		\includegraphics[width=0.8\linewidth]{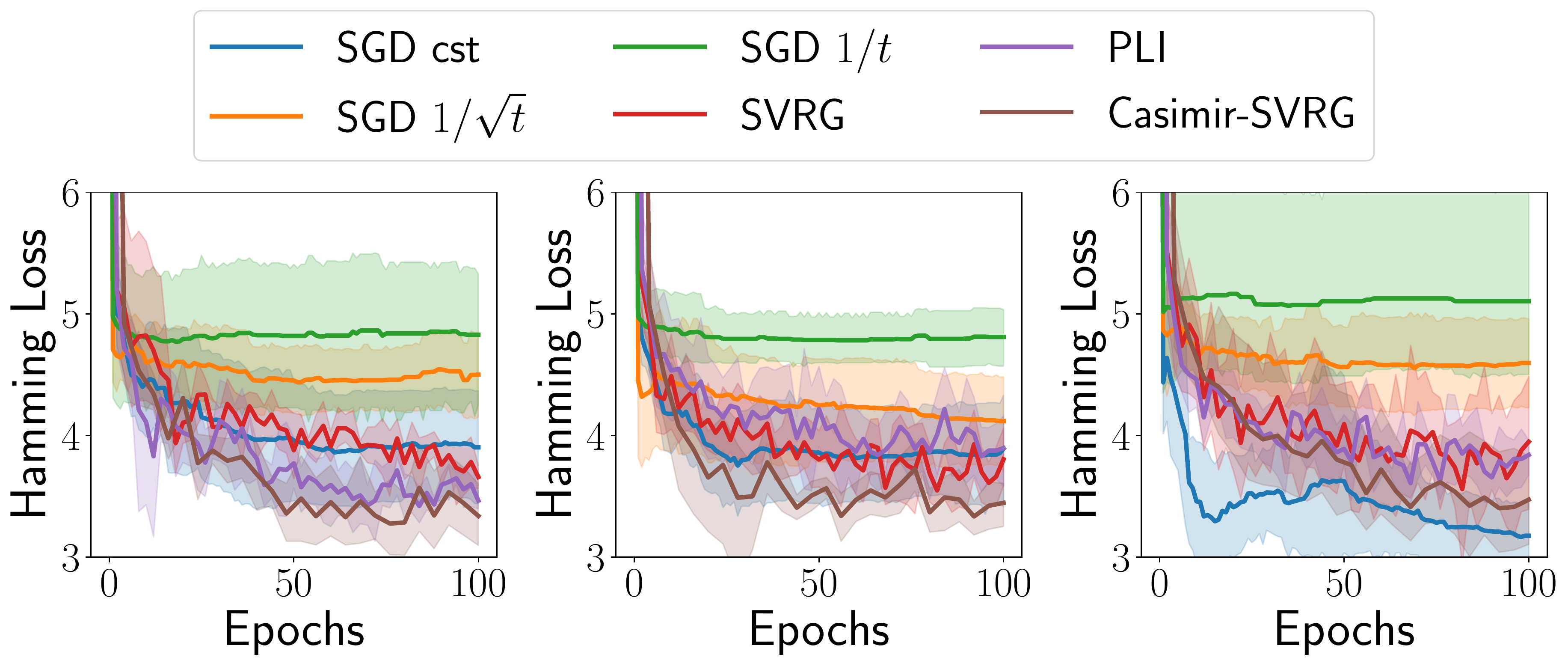}
	\end{center}
	\caption{\small Planning as a structured prediction problem.
	We plot the Hamming loss, which measures how good the predicted path is to the actual shortest path on unseen grids. From left to right: $\mu= 1/n, 10^{-2}/n, 10^{-4}/n$ \label{fig:pl:deep_warcraft}}
\end{figure}

The methods we consider are (i) stochastic subgradient methods \cite{davis2019stochastic}, denoted SGD, with various learning rates strategies
$\gamma_t = \gamma_0$, $\gamma_t = \gamma_0 /\sqrt{t}$
and $\gamma_t = \gamma_0 / t$, (ii) a variance-reduced stochastic sub-gradient method, denoted SVRG~\cite{johnson2013accelerating},  (iii) an accelerated algorithm on the Moreau-envelope of the objective as described in~\cite{pillutla2018smoother}, denoted Casimir-SVRG, (iv) a prox-linear algorithm with incremental inner loop as described in~\eqref{eq:pl:prox_lin}, denoted PLI.
For SGD, subgradients of~\eqref{eq:pl:planning} can be computed by estimating the highest reward path $y'$ associated with a given sample $(x_i, y_i)$ for a  feature map parameterized by the current parameters $w$. 
Both Casimir-SVRG and PL are implemented by smoothing the objective~\eqref{eq:pl:planning}. We take inf-convolution of the max by a squared $\ell_2$ norm, which can be approximated by returning the top-$K$ shortest paths for the given score function~\cite{pillutla2018smoother}.

In \Cref{fig:pl:deep_warcraft}, we observe that SGD with constant step-size carefully tuned can perform as well as more sophisticated methods such as the modified Gauss-Newton method. Most importantly, for a small regularization parameter ($\mu=10^{-4}/n$), \textbf{SGD yields the best test Hamming loss}, which in this task is the target metric.

\begin{figure}[t]
	\begin{center}
	\includegraphics[width=0.65\linewidth]{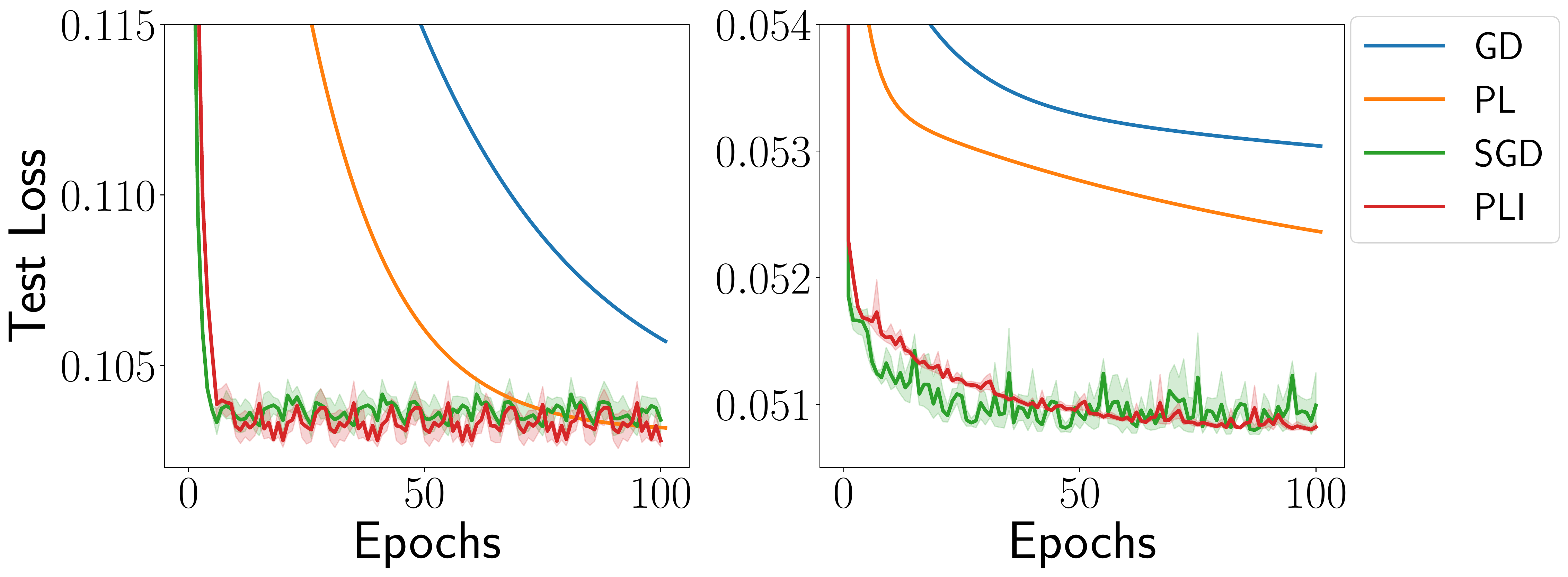}
	\caption{Solving stochastic nonlinear equations. We plot the  ``test loss'', which is the objective value on a separate testing set. \textbf{Left}: ijccn1 dataset. \textbf{Right}: covtype dataset \label{fig:stoch_non_lin}}
	\end{center}
\end{figure}

\myparagraph{Solving Non-linear Equations}
Gauss-Newton-type methods can be applied to stochastic non-linear equations 
of the form
\begin{equation}\label{eq:nonlin_pb}
\min_{\varin \in \reals^\dimin} \ \outerfunc\left( \frac{1}{\nsamp} \sum_{i=1}^\nsamp \innerfunc_i(\varin) \right),
\end{equation}
where $\outerfunc$ is a convex, possibly nonsmooth, Lipschitz function such as $\|\cdot\|_1$ and the inner mappings are smooth and typically of the form $\innerfunc_i(\varin) = \varphi(\xv_i, \varin) - \yv_i$~\cite{tran2020stochastic, zhang2020stochastic}. Problem~\eqref{eq:nonlin_pb} can be interpreted as ensuring that, on average, the non-linear mapping $\varphi(\cdot, \varin)$  maps the inputs $\xv_i$ to the targets $\yv_i$. 

\noindent {\it Algorithms.}
Denoting $\innerfunc(\varin) = \frac{1}{n} \sum_{i=1}^{n} \innerfunc_i(\varin) $, a natural baseline algorithm is to compute iterates as
\begin{equation}\label{eq:baseline_nonlin_pb}
\varin_{t+1} = \varin_t -\stepsize\widehat \nabla \innerfunc(\varin_t)\T \nabla \outerfunc(\widehat \innerfunc(\varin_t))),
\end{equation}
where $\widehat \nabla \innerfunc(\varin)$ and $\widehat \innerfunc(\varin)$ are approximations of $\nabla \innerfunc(\varin)$ and $\innerfunc(\varin)$ respectively that can be approximated from a mini-batch~\cite{wang2017stochastic}; we call this ``SGD''. Note that the minibatch subgradient estimates can be biased since the outer function $\outerfunc$ can be non-linear. 
A modified Gauss-Newton or prox-linear method adapted to the inner finite sum performs the iterations
\begin{align}
	\varin_{t+1} = \argmin_{\varin \in \reals^\dimin} \  & \Big\{ \outerfunc \left( \widehat \innerfunc(\varin_t)  +  \widehat \nabla \innerfunc(\varin_t) (\varin - \varin_t)\right)
	+ \frac{\kappa}{2} \|\varin-\varin_t \|_2^2 \Big\},
 \label{eq:sgn}
\end{align}
where each sub-problem can be solved by incremental algorithms such as the accelerated dual proximal gradient ascent~\cite{tran2020stochastic, zhang2020stochastic}.

\noindent {\it Experiment.}
We consider the experimental setting of~\cite{tran2020stochastic}. The objective is to solve~\eqref{eq:nonlin_pb}
where $\outerfunc$ is the Huber loss, a smooth surrogate of the nonsmooth $\ell_1$ norm,
and inner mappings $\innerfunc$ are the concatenation of four different losses, i.e., $\innerfunc_i(\varin) = (\ell_1(\xv_i^\top \varin, \yv_i), \ldots, \ell_4(\xv_i^\top \varin, \yv_i))$, where the formulations of the losses can be found in~\cite{tran2020stochastic}.
The samples $(\xv_i, \yv_i)$ are drawn from the datasets {\texttt{\small ijcnn1}} or \texttt{\small covtype} from the LIBSVM repository~\cite{libsvm}.

We consider (i) a gradient descent denoted GD, (ii) a modified Gauss-Newton or prox-linear method denoted PL, (iii) a baseline of the form~\eqref{eq:baseline_nonlin_pb}, denoted SGD, (iv) an incremental Gauss-Newton or prox-linear method as described in~\eqref{eq:sgn}, denoted PLI for consistency. 
In \Cref{fig:stoch_non_lin}, we observe that PL outperforms GD as expected in the batch setting. However, this advantage is no longer present in the incremental setting. Here, we find that
the \textbf{SGD baseline~\eqref{eq:baseline_nonlin_pb}
performs on par with the Gauss-Newton variant PLI}.

{\small
\bibliography{prox-linear}
\bibliographystyle{abbrvnat}
}
\clearpage
\appendix

\section{Proofs}
\label{sec:a:cas:quadratic}

\subsection{Proof of Quadratic Convergence (\Cref{prop:pl:main})}

Here, we give the full statement and a simple proof of \Cref{prop:pl:main}, following standard techniques~\cite[Theorem 3]{nesterov2007modified}. 
We prove the proposition for the case $n = 1$ first by considering the problem $\min_{w \in \reals^\dimin} \outerfunc \circ \innerfunc(w)$. We then generalize to $n > 1$ for a proof of \Cref{prop:pl:main} in full generality. 

We are interested primarily in the overparameterized case where $\dimout \le \dimin$. 
Below, we denote  $\grad \innerfunc(\varin) \in \mathbb{R}^{\dimout \times \dimin}$ as the Jacobian of $\innerfunc$ at $\varin$. 
We impose the assumption that the minimal singular value of the transposed Jacobian is bounded away from $0$ as 
$\sigma_{\min}(\grad \innerfunc(\varin)\T) \geq \surj >0$ for any $\varin \in \reals^\dimin$, 
This assumption implies the surjectivity of the Jacobian at each $\varin$. That is, for every $\varout \in \reals^\dimout$, there exists a $v \in \reals^\dimin$ such that $\grad \innerfunc(\varin) v = \varout$.

We also assume that the following minimum values are reached: 
\[
    \outerfunc^* = \min_{\varout \in \reals^\dimout} \outerfunc(\varout),
    \quad \text{and}, \quad
    (\outerfunc\circ \innerfunc)^* = \min_{\varin \in \reals^\dimin} \outerfunc\big(\innerfunc(\varin)\big).
\]

We have the following statement. 
\begin{proposition} \label{prop:pl:combined}
	Consider the compositional problem $\min_{\varin \in \reals^\dimin} \outerfunc \circ \innerfunc(\varin)$ with the following assumptions:   
	\begin{enumerate}[label=(\alph*)]
	    \item $\outerfunc$ is $\ell$-Lipschitz continuous, convex and $\sharparam$-sharp, i.e.,  $\outerfunc(\varout) - \outerfunc^* \geq \sharparam \,  \dist(\varout, \spaceout^*)$ for any $\varout \in \reals^\dimout$ with $\sharparam>0$ and $\dist(\varout, \spaceout^*)$ the Euclidean distance of $\varout$ to $\spaceout^* = \argmin_{\varout\in \reals^\dimout} \outerfunc(\varout) \neq \emptyset$. 
	    \item $\innerfunc$ is $L$-smooth and satisfies $\sigma_{\min}(\grad \innerfunc(\varin)\T) \geq \surj >0$ for any $\varin \in \reals^\dimin$. 
	\end{enumerate}
	The sequence $(\varin_t)_{t=0}^\infty$ produced by the prox-linear algorithm~\eqref{eq:pl:prox_lin} with $M = L \ell$ starting from arbitrary $\varin_0 \in \reals^\dimin$ converges 
	globally as $(\outerfunc \circ \innerfunc)(\varin_t) \to  (\outerfunc \circ \innerfunc)^* = \outerfunc^*$.
	Furthermore, as soon as an iterate $\varin_{j}$ satisfies $\outerfunc(\innerfunc(\varin_t)) - (\outerfunc\circ\innerfunc)^* \leq (\sharparam\surj)^2/(L\ell)
	$, the subsequence $(w_t)_{t=j}^\infty$ convergences quadratically as
	\[
        \outerfunc(\innerfunc(\varin_{t+1})) -(\outerfunc\circ\innerfunc)^*
        \leq \frac{L\ell}{2(\sharparam\surj)^2}(\outerfunc(\innerfunc(\varin_t)) - (\outerfunc\circ\innerfunc)^*) ^2 
        \leq \frac{1}{2} (\outerfunc(\innerfunc(\varin_t)) - (\outerfunc\circ\innerfunc)^*).
	\]
\end{proposition}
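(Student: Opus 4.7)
The plan is to follow the standard prox-linear analysis (as in Nesterov's paper on modified Gauss--Newton methods): build a model that upper-bounds the composite objective, use its global minimization to deduce a three-point inequality, and then exploit the surjectivity of $\grad\innerfunc$ together with the sharpness of $\outerfunc$ to construct an explicit trial point whose model value is controlled. This yields a single one-step recursion on the suboptimality gap $\Delta_t := \outerfunc(\innerfunc(\varin_t)) - \outerfunc^*$ from which both global convergence and local quadratic convergence fall out by choosing the step size appropriately; the identification $(\outerfunc\circ\innerfunc)^* = \outerfunc^*$ comes out as a byproduct.

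The first step is the classical model upper bound: combining $L$-smoothness of $\innerfunc$ with the $\ell$-Lipschitz continuity of $\outerfunc$ gives, for every $w \in \reals^d$, the inequality $\outerfunc(\innerfunc(w)) \leq m_t(w) := \outerfunc\bigl(\innerfunc(\varin_t) + \grad\innerfunc(\varin_t)(w - \varin_t)\bigr) + \tfrac{L\ell}{2}\|w - \varin_t\|^2$. Choosing $M = L\ell$ aligns the prox-linear regularization with this bound, so optimality of $\varin_{t+1}$ gives $\outerfunc(\innerfunc(\varin_{t+1})) \leq m_t(\varin_{t+1}) \leq m_t(w)$ for any trial $w$. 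The key step is to construct a $w$ for which $m_t(w)$ is small. Let $\yv_t := \proj_{\spaceout^*}(\innerfunc(\varin_t))$; sharpness of $\outerfunc$ yields $\|\yv_t - \innerfunc(\varin_t)\| = \dist(\innerfunc(\varin_t), \spaceout^*) \leq \Delta_t/\sharparam$. The hypothesis $\sigma_{\min}(\grad\innerfunc(\varin_t)\T) \geq \surj$ then lets us take $v_t \in \reals^d$, the minimum-norm preimage, with $\grad\innerfunc(\varin_t) v_t = \yv_t - \innerfunc(\varin_t)$ and $\|v_t\| \leq \Delta_t/(\sharparam\surj)$. Plugging $w = \varin_t + \alpha v_t$ into $m_t$ and using convexity of $\outerfunc$ on the convex combination $\innerfunc(\varin_t) + \alpha(\yv_t - \innerfunc(\varin_t))$ gives the one-step recursion $\Delta_{t+1} \leq (1-\alpha)\Delta_t + \tfrac{L\ell}{2(\sharparam\surj)^2}\alpha^2\Delta_t^2$, valid for every $\alpha \in [0,1]$.

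From this recursion the two convergence regimes fall out directly. Taking $\alpha = 1$ yields the quadratic bound $\Delta_{t+1} \leq \tfrac{L\ell}{2(\sharparam\surj)^2}\Delta_t^2$ unconditionally, which is exactly the announced rate; the halving inequality $\Delta_{t+1} \leq \tfrac{1}{2}\Delta_t$ follows whenever $\Delta_t \leq (\sharparam\surj)^2/(L\ell)$. Conversely, for $\Delta_t$ above this threshold, the minimizer of the recursion over $\alpha \in [0,1]$ is $\alpha^\star = (\sharparam\surj)^2/(L\ell\Delta_t)$ and yields a constant per-step decrement $\Delta_t - \Delta_{t+1} \geq (\sharparam\surj)^2/(2L\ell)$. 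Hence after at most $\lceil 2L\ell\Delta_0/(\sharparam\surj)^2 \rceil$ iterations the iterates enter the quadratic regime, which gives global convergence $\Delta_t \to 0$ and in particular $(\outerfunc\circ\innerfunc)^* = \outerfunc^*$.

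The extension to the general statement in Proposition~\ref{prop:pl:main} reduces to this case by stacking: set $\tilde\innerfunc(\varin) := (\innerfunc_1(\varin);\dots;\innerfunc_n(\varin))$ and $\tilde\outerfunc(\yv_1;\dots;\yv_n) := \tfrac{1}{n}\sum_i \outerfunc(\yv_i)$. Short computations show that $\tilde\outerfunc$ is $(\ell/\sqrt n)$-Lipschitz (Cauchy--Schwarz) and $(\sharparam/n)$-sharp (using $\sum_i a_i \geq \sqrt{\sum_i a_i^2}$ for nonnegative $a_i$), while $\tilde\innerfunc$ inherits $L$-smoothness and the surjectivity constant $\surj$ by hypothesis. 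Substituting into the $n=1$ constant $L\ell/(2(\sharparam\surj)^2)$ produces precisely the announced $L\ell n^{3/2}/(2(\sharparam\surj)^2)$. The main obstacle I anticipate is purely bookkeeping: one must verify that the sharpness constant degrades as $\sharparam/n$ rather than $\sharparam/\sqrt n$, since it is exactly this asymmetry with the Lipschitz constant that produces the $n^{3/2}$ (and not merely a $\sqrt n$) factor in the final rate. Everything else --- the surjectivity-based direction construction and the step-size optimization --- is mechanical once the model upper bound is in place.
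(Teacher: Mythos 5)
Your proposal is correct and follows essentially the same route as the paper's proof: the model upper bound with $M=L\ell$, the trial point built from the projection of $\innerfunc(\varin_t)$ onto $\spaceout^*$ combined with a minimum-norm preimage guaranteed by $\sigma_{\min}(\grad\innerfunc(\varin_t)\T)\ge\surj$, convexity plus sharpness to obtain the one-parameter recursion over $s\in[0,1]$, the two step-size regimes yielding quadratic convergence versus a constant decrement, and the same stacking reduction (with $\ell/\sqrt n$-Lipschitzness and $\sharparam/n$-sharpness) for the finite-sum case. The only cosmetic difference is that you observe the quadratic inequality holds unconditionally at $\alpha=1$, whereas the paper phrases it via the location of the minimizer of the recursion; this changes nothing of substance.
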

\begin{proof} 
	For an iterate $\varin_t$ of the prox-linear algorithm, denote $\varout_t^* = \operatorname{Proj}_{\spaceout^*}(\innerfunc(\varin_t))$ the Euclidean projection of $\innerfunc(\varin_t)$ onto the set of minimizers of $\outerfunc$ such that $\dist(\innerfunc(\varin_t), \spaceout^*) = \|\innerfunc(\varin_t)- \varout_t^*\|_2$.
	
	Since the Jacobian $\grad \innerfunc(\varin_t)$ is surjective, there exists $v_t^*$ be such that $\grad \innerfunc(\varin_t) v_t^*  = \varout^*_t - \innerfunc(\varin_t)$.  
	Furthermore, from the minimum singular value condition, there exists a choice of $v_t^*$ such that $\|v_t^*\|\leq \|\varout^*_t - \innerfunc(\varin)\|_2/\surj$ (see \cite[Lemma 6]{nesterov2007modified} for a proof).
	
	If $M \geq L\ell$, then the iterates of the prox-linear algorithm satisfy~\cite{drusvyatskiy2019efficiency}
	\begin{align}
		\outerfunc(\innerfunc(\varin_{t+1})) &\leq \min_{v\in \reals^\dimin} \left\{ \outerfunc(\innerfunc(\varin_t) + \nabla\innerfunc(\varin_t) v)  + \frac{\kappa}{2} \|v\|_2^2  
		\right\} \nonumber\\
		& \stackrel{\text{(i)}}{\leq} \min_{s \in [0,1]} 
		\left\{ \outerfunc(\innerfunc(\varin_t) + s \grad \innerfunc(\varin_t) v_t^*) + \frac{\kappa s^2}{2} \|v_t^*\|_2^2 
		\right\} \nonumber\\
		& \stackrel{\text{(ii)}}{\leq} \min_{s\in [0, 1]} \left\{ \outerfunc(\innerfunc(\varin_t) + s (\varout_t^* -\innerfunc(\varin_t)) + \frac{\kappa s^2}{2\surj^2} \|\varout_t^*-\innerfunc(\varin_t)\|_2^2  \right\} \nonumber\\
		& \stackrel{\text{(iii)}}{\leq} \min_{s\in [0, 1]} \left\{ s \outerfunc^* + (1-s)\outerfunc(\innerfunc(\varin_t)) + \frac{\kappa s^2}{2(\surj\sharparam)^2} (\outerfunc(\innerfunc(\varin_t)) - \outerfunc^*)^2 \right\} \,.\label{eq:pl:prox_lin_proof1}
	\end{align}
	Here, we (i) restricted the domain of the minimization to $v = s v_t^*$ with $s \in [0, 1]$, (ii) plugged in the definition of $v_t^*$ and the bound on $\|v_t^*\|$, and, 
	(iii) used the convexity and sharpness of $\outerfunc$. 
	Next, by subtracting $\outerfunc^*$ from both sides, we get
	\begin{align*}
		\outerfunc(\innerfunc(\varin_{t+1})) - \outerfunc^*  
		& \leq 	\min_{s \in [0,1]} 
		\left\{ (1-s)(\outerfunc(\innerfunc(\varin_t)) - \outerfunc^*) 
		+ \frac{s^2\kappa }{2 (\surj\sharparam)^2}(\outerfunc(\innerfunc(\varin_t)) - \outerfunc^*) ^2 \right\} . 
	\end{align*}
	If $\outerfunc(\innerfunc(\varin_t)) - \outerfunc^* \leq (\sharparam\surj)^2/\kappa$, the minimum in~\eqref{eq:pl:prox_lin_proof1} is reached at $s=1$ and we get 
	\[
	\outerfunc(\innerfunc(\varin_{k+1})) - \outerfunc^* \leq \frac{\kappa }{2 (\surj\sharparam)^2}(\outerfunc(\innerfunc(\varin_t)) - \outerfunc^*) ^2 \leq \frac{1}{2} (\outerfunc(\innerfunc(\varin_t)) - \outerfunc^*).
	\]
	This is the quadratic convergence phase. 
	On the other hand, if $\outerfunc(\innerfunc(\varin_t)) - \outerfunc^* \geq (\sharparam\surj)^2/\kappa$, then the minimum in~\eqref{eq:pl:prox_lin_proof1} is reached at 
	$
	    s = (\surj\sharparam)^2 /\left(\kappa \left(\outerfunc(\innerfunc(\varin_t) - \outerfunc^* \right)\right)
	$
, and we have the bound
	\[
	\outerfunc(\innerfunc(\varin_{t+1})) - \outerfunc^* \leq \outerfunc(\innerfunc(\varin_t)) - \outerfunc^* -\frac{(\sharparam\surj)^2}{2\kappa}.
	\]
	Since $f$ is bounded from below, the sequence $\outerfunc(\innerfunc(\varin_t))$ converges to $\outerfunc^*$. Hence, the minimum of the composite objective matches the minimum of the outer function, i.e., $\outerfunc^* = (\outerfunc\circ\innerfunc)^*$.
\end{proof}

We can now prove \Cref{prop:pl:main} as a corollary of \Cref{prop:pl:combined}. 
\begin{proof}[Proof of \Cref{prop:pl:main}]
    Consider the reduction $\innerfunc: \reals^{\dimin} \to \reals^{\dimout  n}$ and $\bar \outerfunc : \reals^{\dimout n} \to \reals$ given by
    \begin{equation}\label{eq:pl:erm_decomp}
    \innerfunc(\varin) = (\innerfunc_1(\varin) ;\ldots;\innerfunc_n(\varin)),
    \quad\text{and},\quad 
    \bar\outerfunc\big(\varout_1;\ldots;\varout_n\big) = \frac{1}{n}\sum_{i=1}^{n} \outerfunc_i(\varout_i) \,,
    \end{equation}
    where we use semi-colons to denote the concatenation of vectors. The finite-sum problem \eqref{eq:pl:erm_pb} now reduces to $\min_\varin \bar \outerfunc \circ \innerfunc(\varin)$. 
    
    We have by definition that $\bar \outerfunc$ is convex.
    Next, $\bar \outerfunc$ is $\bar\ell$-Lipschitz with $\bar \ell = \ell/\sqrt{n}$ since 
    \begin{align*}
        |\bar\outerfunc(u) - \bar\outerfunc(u')|
        &\le \frac{1}{n}\sum_{i=1}^n |\outerfunc(u_i) - \outerfunc(u_i')|
        \le \frac{\ell}{n}\sum_{i=1}^n \norm{u_i - u_i'}_2 
        \\
        &\le \frac{\ell}{\sqrt{n}} \left( \sum_{i=1}^n \norm{u_i - u_i'}_2^2 \right)^{1/2}
        = \frac{\ell}{\sqrt{n}} \norm{u - u'}_2 \,.
    \end{align*}
    Further, we argue that $\bar\outerfunc$ is $\bar \mu$-sharp with $\bar \mu = \mu/n$.
    Note that $(U^*)^n$ is the argmin set of $\bar \outerfunc$ where $U^*$ is the argmin set of $\outerfunc$. Further, their minimum values satisfy $\bar \outerfunc^* := \min \bar \outerfunc = \min \outerfunc = f^*$. Therefore, we have, 
    \begin{align*}
        \bar \outerfunc(u) - \bar \outerfunc^*
        &=
        \frac{1}{n} \sum_{i=1}^n (\outerfunc(u_i) - \outerfunc^*)
        \ge \frac{\mu}{n} \sum_{i=1}^n \dist(u_i, U^*) 
        \ge \frac{\mu}{n} \dist(u, (U^*)^n) ,
    \end{align*}
    where we used 
    \[
        \dist(u, (U^*)^n) = \sqrt{\sum_{i=1}^n \dist(u_i, U^*)^2}
        \le \sum_{i=1}^n \dist(u_i, U^*). 
    \]
    All the assumptions of \Cref{prop:pl:combined} are met, and invoking it now completes the proof.
\end{proof}

\subsection{Proof of the Statistical Setting (\Cref{prop:pl:noise})}

We give the full statement of \Cref{prop:pl:noise} and its proof.

\begin{proposition} \label{prop:pl:noise:full}
    Fix some $\delta \in (0, 1)$ and consider problem \eqref{eq:pl:erm_pb} with $\gv_i$ and $\outerfunc$ as defined above with the output dimension $\dimout \ge 4 \log (2n/\delta)$. Suppose that
    (i)
    $\varin \mapsto \varphi(\xv_i; \varin)$ is $L$-smooth for each $i \in [n]$, and, 
    (ii) the function $\varphi$ can interpolate the data so that $\varphi(\xv_i; \wv^*) = \yv_i$ for each $i \in [n]$ for some $\varin^* \in \reals^\dimin$.
    Suppose there exists a scalar $R>0$ and an integer $j$ such that for all integers $t \ge j$, we have
    \begin{align} \label{eq:pl:noise-quadratic-a}
        F(\wv_t) - \min F \leq R\,, \quad
        \text{and} \quad
        F(\wv_{t+1}) - \min F \leq \frac{1}{2R}\big(F(\wv_t) - \min F\big)^2  \,.
    \end{align}
    Then, we have the following with probability at least $1-\delta$: 
    \begin{enumerate}[label=(\alph*)]
    \item If the noise level satisfies
        \[
        \sigma > 
        \frac{R}{\sqrt{\dimout}} \left( 1 - \left( \frac{4}{\dimout} \log (2n/\delta) \right)^{1/4}  \right)^{-1} \,,
    \]
    then the first iterate $\varin_t$ enjoying quadratic convergence \eqref{eq:pl:noise-quadratic-a} 
    satisfies $F(\varin_t) < F(\overline \varin)$.
    \item Conversely, if the noise level satisfies
        \[
        \sigma <
        \frac{R}{\sqrt{\dimout}} \left( 1 + \left( \frac{16}{\dimout} \log (2n/\delta) \right)^{1/4}  \right)^{-1} \,,
    \]
    then the first iterate $\varin_t$ enjoying quadratic convergence \eqref{eq:pl:noise-quadratic-a} 
    satisfies $F(\varin_t) > F(\overline \varin)$. 
    \end{enumerate}
\end{proposition}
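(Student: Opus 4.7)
The plan is to reduce the proposition to a concentration argument on $F(\overline\varin) = \tfrac{1}{n}\sum_{i=1}^{n}\|\xi_i\|_2$. Two observations drive the reduction: the interpolation hypothesis forces $\min F = 0$, so the quadratic-regime bound $F(\varin_t) - \min F \le R$ becomes simply $F(\varin_t) \le R$; and at the reference point $\overline\varin$ we have $\innerfunc_i(\overline\varin) = -\xi_i$, so $\outerfunc = \|\cdot\|_2$ gives $F(\overline\varin) = \tfrac{1}{n}\sum_i \|\xi_i\|_2$. Once I sandwich $F(\overline\varin)$ between two deterministic multiples of $\sigma\sqrt{\dimout}$, comparison with the threshold $R$ that bounds the first iterate of the quadratic regime yields both inequalities.

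To bound $F(\overline\varin)$ I would invoke the Laurent--Massart tail inequalities for $X \sim \chi^2_\dimout$,
\[
\Pr[X \ge \dimout + 2\sqrt{\dimout x} + 2x] \le e^{-x}, \qquad \Pr[X \le \dimout - 2\sqrt{\dimout x}] \le e^{-x}.
\]
Since $\|\xi_i\|_2^2/\sigma^2 \sim \chi^2_\dimout$ and the $\xi_i$ are independent, setting $x = \log(2n/\delta)$ and union-bounding over the $2n$ events gives, with probability at least $1 - \delta$,
\[
\sigma\sqrt{\dimout}\sqrt{1 - 2\sqrt{x/\dimout}} \;\le\; \|\xi_i\|_2 \;\le\; \sigma\sqrt{\dimout}\sqrt{1 + 2\sqrt{x/\dimout} + 2x/\dimout}
\]
for every $i \in [n]$; averaging transfers the same two-sided bound to $F(\overline\varin)$.

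Next I would convert these square-root bounds into the fourth-root form stated in the proposition, using the dimension hypothesis $\dimout \ge 4\log(2n/\delta)$, i.e.\ $4x/\dimout \le 1$. For the lower bound, setting $u = (4x/\dimout)^{1/4}$ so that $u^2 = 2\sqrt{x/\dimout} \le 1$, the elementary inequality $\sqrt{1 - u^2} \ge 1 - u$ on $[0,1]$ yields $\sqrt{1 - 2\sqrt{x/\dimout}} \ge 1 - (4x/\dimout)^{1/4}$. For the upper bound, setting $v = (16x/\dimout)^{1/4}$ so that $2\sqrt{x/\dimout} = v^2/2$ and $2x/\dimout = v^4/8$, squaring reduces the target inequality $\sqrt{1 + v^2/2 + v^4/8} \le 1+v$ to $v^3 \le 16 + 4v$, which holds for $v \le \sqrt{2}$ --- a range guaranteed by $\dimout \ge 4x$. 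Combining,
\[
\sigma\sqrt{\dimout}\bigl(1 - (4x/\dimout)^{1/4}\bigr) \;\le\; F(\overline\varin) \;\le\; \sigma\sqrt{\dimout}\bigl(1 + (16x/\dimout)^{1/4}\bigr),
\]
and the two hypotheses on $\sigma$ translate directly into $F(\overline\varin) > R \ge F(\varin_t)$ in case (a) and $F(\overline\varin) < R$ (matched with the quadratic-regime threshold $F(\varin_t) \simeq R$) in case (b).

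I expect the main obstacle to be the last algebraic tightening rather than the probabilistic step: Laurent--Massart is naturally stated with square roots, whereas the proposition commits to specific fourth-root constants $4$ and $16$. Matching those constants exactly and verifying that the conversion loses nothing for every $\dimout \ge 4\log(2n/\delta)$ is where care is needed --- the choices $u^2 = 2\sqrt{x/\dimout}$ and $v^2/2 + v^4/8 = 2\sqrt{x/\dimout} + 2x/\dimout$ are tuned so that the resulting polynomial inequalities $1 - u^2 \ge (1-u)^2$ and $v^3 \le 16 + 4v$ are both cleanly true exactly on the allowed range of $x/\dimout$.
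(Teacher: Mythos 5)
Your proposal follows essentially the same route as the paper's proof: interpolation gives $\min F = 0$ and $F(\overline \varin) = \tfrac1n\sum_i\|\xiv_i\|_2$, the Laurent--Massart $\chi^2$ bounds with $\lambda=\log(2n/\delta)$ plus a union bound sandwich $F(\overline\varin)$, and the condition $\dimout \ge 4\log(2n/\delta)$ converts the square-root bounds into the fourth-root form with constants $4$ and $16$ before comparing with $R$ (the paper does the conversion via $\lambda/\dimout \le \sqrt{\lambda/\dimout}$ and $\sqrt{1\pm y} \lessgtr 1 \pm \sqrt{y}$ rather than your polynomial inequality, but this is a cosmetic difference). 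Your final comparison step, identifying the first quadratic-regime iterate's value with the threshold $R$ in case (b), carries the same informal gloss as the paper's ``quadratic convergence holds before the noise level if and only if $F(\overline\varin)\ge R$,'' so it matches the published argument.
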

\begin{proof} 
First, under the interpolation assumption, we have that $0 \le \min F \le F(\varin^*) = 0$, so $\min F = 0$.
Therefore, in this setting, 
quadratic convergence holds before the noise level if and only if $F(\overline \varin) \ge R$. 
To complete the proof, we show below that with probability at least $1-\delta$ that
\begin{align} \label{eq:pl:stat-arg-0}
   \sigma \sqrt{\dimout} \left( 1 - \left( \frac{4}{\dimout} \log (2n/\delta) \right)^{1/4}  \right)
   \le  F(\overline \varin) \le 
   \sigma \sqrt{\dimout} \left( 1 + \left( \frac{16}{\dimout} \log (2n/\delta) \right)^{1/4}  \right)\,.
\end{align}

To this end, we simplify
    \begin{align} \label{eq:pl:prox-lin:stat-arg}
        F(\overline \varin) = \frac{1}{n} \sum_{i=1}^n \| \varphi(\xv_i ; \overline \varin) - \yv_i \|_2 
        = \frac{1}{n} \sum_{i=1}^n \| \xiv_i\|_2 \,. 
    \end{align}
    Noting that $\|\xiv_i\|_2^2$ follows a $\chi^2$ distribution with $\dimout$ degrees of freedom, a standard concentration argument shows that  (see example \cite[Lemma 1]{laurent2000adaptive})
    \[
        \mathbb{P}\left(
        \sigma^2 \dimout \left(1 - 2\sqrt{\frac{\lambda}{\dimout}} \right)
        \le \|\xiv_i\|^2_2 
        \le
        \sigma^2 \dimout \left(1 + 2\sqrt{\frac{\lambda}{\dimout}} + \frac{2\lambda}{\dimout}\right)
        \right) \ge 1 - 2\exp(-\lambda) \,
    \]
    for any $\lambda > 0$. 
    Next, we plug in $\lambda = \log (2n / \delta)$.
    Noting that $\lambda / k \le 1/4$, we use the bound $\lambda/k \le \sqrt{\lambda/k}$ and $\sqrt{1-x} \ge 1-\sqrt{x}$ to get that
    \[
        \sigma \sqrt{k} \, \Big(1 - (4\lambda/k)^{1/4}\Big)
        \le \norm{\xi_i}_2 
        \le \sigma\sqrt{k}
        \sqrt{1 + 4 \sqrt{\lambda/k}}
        \le \sigma\sqrt{k} \, \Big(1 + (16\lambda/k)^{1/4}\Big) 
    \]
    with probability at least $1-\delta/n$.
    Invoking the union bound over $i=1,\cdots,n$, we have with probability at least $1-\delta$ that 
    \[
         \sigma \sqrt{\dimout} \left( 1 - \left( \frac{4}{\dimout} \log (2n/\delta) \right)^{1/4}  \right)
   \le  \norm{\xi_i}_2 \le 
   \sigma \sqrt{\dimout} \left( 1 + \left( \frac{16}{\dimout} \log (2n/\delta) \right)^{1/4}  \right)
    \]
    holds simultaneously for each $i \in [n]$. 
    Plugging this into \eqref{eq:pl:prox-lin:stat-arg} completes the proof.
\end{proof}

\end{document}